\newtheorem{lem}{Lemma}[section]
\newtheorem{thm}{Theorem}[section]
\newtheorem{exa}{Example}[section]
\newcommand{\diag}{\mathrm{diag}\,}
\newcommand{\rank}{\mathrm{rank}\,}
\newcommand{\Rs}{\mathbb{R}}
\newcommand{\bz}{{\bf 0} }
\newcommand{\lamx}{\lambda_{\max}}
\newcommand{\beq}{ \begin{equation} }
\newcommand{\eeq}{ \end{equation} }
\newcommand{\bt}{ \begin{tabular} }
\newcommand{\et}{ \end{tabular} }
\begin{document}

\bibliographystyle{plain}
\title{On Theorems of \v{S}i\v{n}ajov\'{a}, Rankin and Kuperberg Concerning Spherical Point Configurations}
\vspace{0.3in}
        \author{ A. Y. Alfakih
  \thanks{E-mail: alfakih@uwindsor.ca}
  \\
          Department of Mathematics and Statistics \\
          University of Windsor \\
          Windsor, Ontario N9B 3P4 \\
          Canada
}

\date{August 2, 2019. Revised \today}
\maketitle

\noindent {\bf AMS classification:} 15A18, 51K05, 52C17.

\noindent {\bf Keywords:} Orthogonal representation of graphs, distance geometry, sphere packing, dispersion problem, Euclidean distance matrices.
\vspace{0.1in}

\begin{abstract}
This note presents simple linear algebraic proofs of theorems
due to  \v{S}i\v{n}ajov\'{a}, Rankin and Kuperberg concerning spherical point configurations.
The common ingredient in these proofs is the use of spherical Euclidean distance matrices and the Perron-Frobenius theorem.
\end{abstract}

\section{Introduction}

In this note, we are interested in four theorems on spherical point configurations.
The first of these theorems is concerned with orthonormal representation of graphs.
The notion of orthonormal representation of a graph was introduced by Lov\'{a}sz in his study of Shannon capacity of graphs \cite{lov79}.
For a detailed discussion of orthonormal representations see the recent book \cite{lov19}.
Parsons and Pisanski \cite{pp89} introduced the following notion of orthonormal representation, which is slightly different from that of Lov\'{a}sz
\footnote{In Lov\'{a}sz's definition,
the inner product $(p^i)^Tp^j$ is unrestricted if nodes $i$ and $j$ are adjacent.}.
Let $G$ be a simple graph with nodes $1,\ldots,n$. An {\em orthonormal representation } of $G$ is a mapping of the nodes of $G$ to
 unit vectors  $p^1,\ldots,p^n$  in Euclidean $r$-space $\Rs^r$ such that $(p^i)^Tp^j$ is negative or zero
depending on whether nodes $i$ and $j$ are adjacent or not.
The smallest dimension $r$ necessary for such a representation is denoted by $d(G)$. It is easy to see that
$d(G) \geq \alpha(G)$, where $\alpha(G)$ is the independence number of $G$.
 \v{S}i\v{n}ajov\'{a} proved the following.

\begin{thm}[\v{S}i\v{n}ajov\'{a} \cite{sin91}] \label{thmsin}
Let $G$ be a simple graph on $n$ nodes and let $k$ be the number of its nontrivial connected components, i.e., those connected components
with at least $2$ nodes. Then
$d(G) = n - k$.
\end{thm}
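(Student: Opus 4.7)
The plan is to prove the two bounds $d(G) \ge n-k$ and $d(G) \le n-k$ separately, both hinging on decomposing the Gram matrix of a representation into connected-component blocks and applying the Perron--Frobenius theorem to each nontrivial block. The common setup is as follows: given any orthonormal representation $p^1,\dots,p^n$ in $\Rs^r$, form $P=[p^1,\dots,p^n]$ and $B = P^TP$. Then $B$ is positive semidefinite with $\rank(B)\le r$ and $B_{ii}=1$; writing $B = I - C$ exhibits $C$ as symmetric and nonnegative, with $C_{ij}>0$ precisely on the edges of $G$. After reordering vertices by connected components, $B$ becomes block diagonal with one $1\times 1$ block $[1]$ for each isolated vertex and one $m_i\times m_i$ block $B_i = I - C_i$ for each nontrivial component on $m_i$ nodes.

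For the lower bound, connectedness of the $i$th component makes $C_i$ irreducible, and $B_i \succeq 0$ forces $\rho(C_i)\le 1$. By Perron--Frobenius, $\rho(C_i)$ is a simple eigenvalue of $C_i$ and dominates every other eigenvalue, so $1$ occurs at most once in the spectrum of $C_i$, giving $\rank(B_i)\ge m_i-1$. Summing over all blocks yields
\[
r \;\ge\; \rank(B) \;\ge\; \sum_{i=1}^{k}(m_i-1) + t \;=\; n-k,
\]
where $t$ is the number of isolated vertices.

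For the upper bound, I would construct a representation in dimension exactly $n-k$. For each nontrivial component $H_i$ with adjacency matrix $A_i$, connectedness makes $A_i$ nonnegative and irreducible, so $\rho(A_i)>0$ is a simple Perron eigenvalue; setting $\tilde B_i = I - A_i/\rho(A_i)$ yields a PSD matrix of rank $m_i-1$, and any factorization $\tilde B_i = Q_i^TQ_i$ with $Q_i$ of size $(m_i-1)\times m_i$ delivers unit vectors whose pairwise inner products equal $-(A_i)_{jk}/\rho(A_i)$, i.e.\ strictly negative on edges and $0$ on non-edges. Placing the $Q_i$ in mutually orthogonal coordinate subspaces and sending each isolated vertex to a further orthonormal basis vector produces an orthonormal representation in $\Rs^{n-k}$.

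The step I expect to be the real obstacle is the rank lower bound on each $B_i$, which requires combining two pieces of information that are individually insufficient: positive semidefiniteness of $B_i$ pins $\rho(C_i)\le 1$, while irreducibility of $C_i$ (that is, connectedness of the component) guarantees simplicity of $\rho(C_i)$, so that any eigenvalue of $C_i$ equal to $1$ must be the Perron eigenvalue and must occur with multiplicity one. Without this combined ingredient the rank estimate that drives both bounds is lost.
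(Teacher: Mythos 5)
Your proof is correct, and its spectral engine coincides with the paper's: in both arguments the Gram matrix of the representation is written as $I-C$ with $C$ symmetric, nonnegative and supported exactly on the edges, positive semidefiniteness forces $\lamx(C)\le 1$, and the Perron--Frobenius theorem applied to each irreducible (connected, nontrivial) block caps the multiplicity of the eigenvalue $1$ --- hence the nullity of that block of $I-C$ --- at one; your construction $I-A_i/\rho(A_i)$ for the upper bound is likewise exactly the paper's $B=I-\Delta$ with $\Delta^i=A^i/\lamx(A^i)$ and a zero block for the isolated vertices. Where you differ is the packaging: the paper first translates the statement into one about unit spherical EDMs (its Theorem \ref{thmsin2}) and reaches the identity $B=I-\Delta$ only through the EDM machinery --- Theorem \ref{thmEDM}, Gower's spherical criterion $Dw=e$ with $2e^Tw=1$ (Theorem \ref{thmgow}), and Lemma \ref{lemBDel}, which converts the embedding dimension into $n-m(\lamx(\Delta))$ --- whereas you read the Gram matrix $B=P^TP$ with unit diagonal directly off the unit vectors and need only $\rank (P^TP)=\rank P\le r$ together with the per-block nullity bound, the block structure coming from the fact that nonadjacent (in particular cross-component) pairs must have inner product exactly zero. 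Your route is therefore shorter and self-contained for Theorem \ref{thmsin} itself, and it sidesteps the normalization point the paper's lemma rests on (that the circumscribing sphere of the configuration has radius exactly $1$, i.e.\ that the center lies in the affine hull of the points); what it gives up is the uniform spherical-EDM formulation, which is precisely what the paper reuses for the Rankin and Kuperberg theorems, where one starts from a distance matrix rather than from an explicit family of unit vectors.
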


The remaining theorems are concerned with the dispersion problem.
The {\em dispersion problem} is the problem of maximizing, over all $n$-point configurations
on the unit ($r-1$)-sphere in $\Rs^r$, the minimum distance between any two points.
The dispersion problem has applications in sphere packing and spherical designs \cite{wyx18}.
Davenport and Haj\'{o}s \cite{dh51} and Rankin \cite{ran55} provided solutions of this problem for the case $n=r+2$.
Rankin \cite{ran55}, also, provided a solution for the case $n=2r$.
Before presenting Rankin's two theorems, we need the following definition.
The {\em regular $r$-crosspolytope} is the convex hull of the union of $r$ mutually orthogonal line segments of length 2
and intersecting at their common midpoint. That is, the regular $r$-crosspoltope is the convex hull of ($\pm e^1, \pm e^2, \ldots, \pm e^r$),
where $e^i$ is the $i$th standard unit vector in $\Rs^r$.

\begin{thm}[Rankin \cite{ran55}]  \label{thmran1}
 Let $p$ be an $n$-point configuration on the unit ($r-1$)-sphere in $\Rs^r$. If $n=r+2$, then two points
of $p$ are at a distance of at most $\sqrt{2}$ from each other.
\end{thm}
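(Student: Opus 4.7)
The plan is to argue by contradiction. Suppose every pair $(p^i,p^j)$ with $i \neq j$ satisfies $\|p^i - p^j\| > \sqrt{2}$. Since each $p^i$ is a unit vector, $\|p^i - p^j\|^2 = 2 - 2(p^i)^T p^j$, so this hypothesis is equivalent to $(p^i)^T p^j < 0$ for all $i \neq j$.

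Let $P = [p^1, p^2, \ldots, p^{r+2}] \in \Rs^{r \times (r+2)}$ and set $M = P^T P$. Then $M$ is a symmetric positive semidefinite $(r+2) \times (r+2)$ matrix with unit diagonal, strictly negative off-diagonal entries, and rank at most $r$. In particular $\dim \ker(M) \geq 2$, so $0$ is an eigenvalue of $M$ of multiplicity at least two.

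Now shift in order to apply Perron--Frobenius. Choose $c \geq \max(1,\lambda_{\max}(M))$ and set $A = cI - M$. Then $A$ is entrywise non-negative (its diagonal is $c - 1 \geq 0$ and its off-diagonals are $-M_{ij} > 0$) and, since every off-diagonal entry is positive, $A$ is irreducible. Its eigenvalues are the numbers $c - \lambda_i(M) \in [0,c]$, so the spectral radius of $A$ equals $c$, and the corresponding eigenspace is precisely $\ker(M)$, which has dimension at least $2$. But the Perron--Frobenius theorem guarantees that the spectral radius of an irreducible non-negative matrix is a simple eigenvalue---a contradiction.

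The main (rather modest) obstacle is the correct set-up of Perron--Frobenius: the shift $c$ must be large enough both to render $A$ entrywise non-negative and to push the $0$-eigenspace of $M$ up to the top of the spectrum of $A$. Both conditions are guaranteed by the choice $c \geq \max(1,\lambda_{\max}(M))$, and the rest of the argument is structural, matching the paper's stated theme of combining spherical Gram/distance matrix information with Perron--Frobenius.
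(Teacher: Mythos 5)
Your proof is correct and is essentially the paper's argument: up to the harmless shift by a multiple of the identity, your matrix $A=cI-M$ is exactly the paper's matrix $\Delta = D/2+I-E$ (whose off-diagonal entries are $-(p^i)^Tp^j$), and both proofs derive the contradiction by playing the rank deficiency of the Gram matrix, which forces multiplicity at least $2$ at the top of the spectrum, against the simplicity of the top eigenvalue guaranteed by Perron--Frobenius for an irreducible nonnegative matrix. The only real difference is cosmetic: you work directly with $M=P^TP$ and a generic shift $c$, whereas the paper routes through its unit spherical EDM machinery (Lemmas \ref{lemBDel} and \ref{lemsimplex}, using $Dw=e$ to pin the top eigenvalue of $\Delta$ at $1$), which it needs anyway for the later theorems of Rankin and Kuperberg.
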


\begin{thm}[Rankin \cite{ran55}] \label{thmran2}
Let $p$ be an $n$-point configuration on the unit ($r-1$)-sphere in $\Rs^r$. If $n=2r$ and the distance
between any two points of $p$ is $\geq \sqrt{2}$, then $p$ is unique, up to a rigid motion, and the points  of $p$ are the vertices
of the regular $r$-crosspolytope.
\end{thm}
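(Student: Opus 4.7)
The plan is to recast the distance hypothesis in terms of inner products, analyze the resulting Gram matrix via the Perron--Frobenius theorem, and then read off the crosspolytope structure. Since each $p^i$ is a unit vector, $\|p^i - p^j\| \geq \sqrt{2}$ is equivalent to $(p^i)^T p^j \leq 0$. Let $P$ be the $r \times 2r$ matrix whose columns are the $p^i$ and set $G := P^T P$. Then $G$ is positive semidefinite of rank at most $r$, with unit diagonal and non-positive off-diagonal entries, so $A := I - G$ is entrywise non-negative with zero diagonal. The eigenvalue $1$ of $A$ has multiplicity at least $r$, because its eigenspace equals $\N(A - I) = \N(G)$ and $\dim \N(G) \geq 2r - r = r$.

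I would then decompose $A$ using the connected components $C_1, \ldots, C_c$ of the graph on $\{1,\ldots,2r\}$ whose edges are the pairs with $(p^i)^T p^j < 0$. Each block $A_{C_\ell}$ is irreducible and non-negative, so Perron--Frobenius supplies a simple Perron eigenvalue $\rho_\ell$ with a strictly positive eigenvector $v^{(\ell)}$. Extending $v^{(\ell)}$ by zeros and using that $G$ is positive semidefinite, the identity $G v^{(\ell)} = (1 - \rho_\ell) v^{(\ell)}$ forces $(1 - \rho_\ell)\|v^{(\ell)}\|^2 \geq 0$, hence $\rho_\ell \leq 1$. Combining the multiplicity-$\geq r$ lower bound for the eigenvalue $1$ of $A$ with simplicity of each Perron eigenvalue shows that at least $r$ components must have $\rho_\ell = 1$. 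An isolated vertex would yield $\rho_\ell = 0$, so every component with $\rho_\ell = 1$ has size at least $2$; since the sizes must sum to $2r$, this forces \emph{exactly} $r$ components, each of size $2$ and each with $\rho_\ell = 1$.

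To finish, I would identify the configuration. Each size-$2$ component is a pair $\{i,j\}$ whose $2 \times 2$ block of $A$ has zero diagonal and off-diagonal entry $a$ equal to its Perron eigenvalue, namely $a = 1$; hence $(p^i)^T p^j = -1$ and $p^j = -p^i$. For indices lying in different pairs, the absence of an edge means the corresponding inner product is non-negative, and the standing hypothesis $(p^i)^T p^j \leq 0$ then forces orthogonality. Choosing one vector from each pair therefore yields an orthonormal basis of $\Rs^r$, so the full configuration is $\{\pm p^{i_1}, \ldots, \pm p^{i_r}\}$, i.e., the vertex set of a regular $r$-crosspolytope. Uniqueness up to a rigid motion is automatic, because any two orthonormal bases of $\Rs^r$ are related by an orthogonal transformation.

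The main obstacle, in my view, is the combinatorial counting step that simultaneously pins down the number of components, their sizes, and their Perron eigenvalues; this is where the rank bound on $G$, the semidefiniteness, and the irreducibility of each block must all be brought to bear at once. Once the perfect-matching structure of the strict-negativity graph is extracted, the remaining identification of antipodal pairs and the orthogonality between distinct pairs is a routine consequence of the Perron--Frobenius equality case.
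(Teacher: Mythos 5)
Your proof is correct, and it reaches the conclusion by a more direct route than the paper does. The paper never proves this theorem on its own: it restates it as an EDM result (Theorem \ref{thmran22}) and obtains it as the special case $n-r=r$ of Kuperberg's Theorem \ref{thmkup2}, whose proof runs through the EDM machinery --- writing $D=2(E-I)+2\Delta$, invoking Gower's characterization $Dw=e$, $e^Tw>0$ (Theorem \ref{thmgow}) inside Lemma \ref{lemBDel} to get $\lamx(\Delta)=1$ with multiplicity exactly $n-r$, and then using Perron--Frobenius to force $\Delta$ into a block-diagonal form with $n-r$ irreducible blocks, each a unit spherical simplex EDM; when $n=2r$ the $r$ blocks must each have order $2$, which yields the crosspolytope. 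Your matrix $A=I-P^TP$ is literally the paper's $\Delta$ (the Gram matrix $B=I-\Delta$ there has unit diagonal), so the engine is the same: Perron--Frobenius applied to $I-\mathrm{Gram}$ together with an eigenvalue-multiplicity count. What you do differently is bypass the EDM formalism entirely: the multiplicity bound comes from the trivial rank estimate $\rank(P^TP)\le r$ rather than from Lemma \ref{lemBDel}, the bound $\rho_\ell\le 1$ comes straight from positive semidefiniteness, and your counting argument (at least $r$ components carrying a simple eigenvalue $1$, each of size at least $2$, sizes summing to $2r$) replaces the appeal to the general structure theorem, after which the antipodal pairs and cross-pair orthogonality are read off explicitly. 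The trade-off is clear: the paper's route proves the full Kuperberg theorem for all $r+2\le n\le 2r$ within one uniform framework reused for all the results in the note, while yours is shorter and self-contained for the $n=2r$ case (and shows in passing that the points must span $\Rs^r$). One small wording caveat: a component consisting of an isolated vertex gives a $1\times 1$ zero block, which is reducible under the paper's convention, so "each block is irreducible" should be restricted to components of size at least $2$; your later remark that isolated vertices have $\rho_\ell=0$ already handles them correctly, so this is cosmetic.
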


Kuperberg \cite{kup07}  generalized Rankin's result to all $n$:  $r+2 \leq n \leq 2r$.

\begin{thm}[Kuperberg \cite{kup07}] \label{thmkup1}
Let $p$ be an $n$-point configuration on the unit sphere in $\Rs^r$ such that $2 \leq n- r \leq r$.
If the minimum distance between any two points of $p$ is at least $\sqrt{2}$, then
$\Rs^r$ can be split into the orthogonal product $\prod_{i=1}^{n-r} L_i$ of $n-r$ subspaces of $\Rs^r$
such that $L_i$ contains exactly $r_i + 1$ points of $p$, where $r_i$ is the dimension of $L_i$.
\end{thm}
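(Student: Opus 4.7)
The plan is to encode the configuration in its Gram matrix, extract a block-diagonal structure from the connected components of a naturally associated graph, bound the nullity of each block by one via Perron--Frobenius, and finally group the pieces to obtain $n-r$ orthogonal subspaces with the prescribed property.

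Write $P=[p^1\,\cdots\,p^n]$ and $M=P^TP$. The hypothesis $\|p^i-p^j\|\ge\sqrt 2$ is equivalent to $M_{ij}\le 0$ for $i\ne j$; one also has $M_{ii}=1$, $M\succeq 0$, and $\rank M\le r$. Let $G$ be the graph on $\{1,\ldots,n\}$ with $i\sim j$ iff $M_{ij}<0$. Non-adjacent vertices of $G$ satisfy both $M_{ij}\ge 0$ and $M_{ij}\le 0$, hence $M_{ij}=0$; in particular vertices in different connected components of $G$ are orthogonal, so $M$ is block-diagonal with blocks $M_C$ indexed by the components $C$, and the subspaces $L_C:=\mbox{span}\{p^j:j\in C\}$ are pairwise orthogonal.

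The key step is the nullity bound $\dim\N(M_C)\le 1$ for every nontrivial component $C$. Apply Perron--Frobenius to $I-M_C$: this matrix has nonnegative entries and is irreducible (the induced subgraph of $G$ on $C$ is connected). Its spectral radius $\rho$ is therefore a simple eigenvalue, and $M_C\succeq 0$ forces every eigenvalue of $I-M_C$ to be at most $1$. Hence either $\rho<1$ and $1$ is not an eigenvalue of $I-M_C$ (nullity $0$), or $\rho=1$ and it is simple (nullity $1$). Singleton blocks are $[1]$, with nullity zero.

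Set $r'=\rank M\le r$. Then $n-r'=\dim\N(M)=\sum_C\dim\N(M_C)$ with each summand in $\{0,1\}$, so exactly $n-r'$ components are \emph{tight}, meaning $|C|=\dim L_C+1$; the rest satisfy $|C|=\dim L_C$. To assemble Kuperberg's $n-r$ subspaces, partition the tight components into $n-r$ nonempty groups, assign every non-tight component arbitrarily to some group, and distribute the $r-r'$ leftover dimensions of $\bigl(\textstyle\sum_C L_C\bigr)^\perp\subset\Rs^r$ by giving a group of $g$ tight components exactly $g-1$ additional dimensions. A routine count then shows each resulting subspace $L_i$ contains $\dim L_i+1$ points, and $\sum_i(g_i-1)=(n-r')-(n-r)=r-r'$ exhausts the spare dimensions. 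The main obstacle is the Perron--Frobenius nullity bound; the combinatorial assembly afterwards is mechanical.
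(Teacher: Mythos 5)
Your argument is correct, and its engine is the same as the paper's: your $M$ is exactly the Gram matrix $B=E-\tfrac12 D=I-\Delta$ used here, your graph is the support graph of $\Delta$, and your Perron--Frobenius bound (each diagonal block of $M$ has nullity at most one) is precisely the content of Lemmas \ref{lemBDel} and \ref{lemsimplex}. Where you genuinely differ is in the final assembly, and that difference is worth having. The proof of Theorem \ref{thmkup2} given here asserts in (\ref{QDelQT}) that $\Delta$ splits into exactly $n-r$ irreducible blocks (plus possibly zero rows), each with $\lamx(\Delta^i)=1$; it handles zero rows via $\tilde{\Delta}^{n-r}$, but it tacitly excludes what you call non-tight components, i.e.\ irreducible blocks with $\lamx(\Delta^i)<1$, and (via the EDM reformulation with embedding dimension exactly $r$) it sets aside configurations that do not span $\Rs^r$. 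The first is not automatic: in $\Rs^4$ take $\pm e^1,\pm e^2$ together with two unit vectors $u,v$ in the span of $e^3,e^4$ making an angle of $120^{\circ}$; all pairwise distances are $\geq\sqrt2$ and $n-r=2$, yet $\Delta$ has three irreducible blocks, one with largest eigenvalue $\tfrac12$, so a component-by-component split gives the wrong number of subspaces (the conclusion still holds, after merging $\{u,v\}$ with an antipodal pair). The second matters for Theorem \ref{thmkup1} as literally stated, since nothing forces the $n$ points to span $\Rs^r$ (e.g.\ an octahedron placed on the unit sphere of $\Rs^4$ has $n-r=2$). Your grouping step --- folding non-tight components into groups and, when $r'=\rank M<r$, handing $g-1$ of the $r-r'$ leftover dimensions to a group of $g$ tight components --- is exactly the repair needed, and your counts check out: each group contains (number of points) $=$ (dimension) $+\,1$, the extra dimensions used total $(n-r')-(n-r)=r-r'$, and a point of one component cannot lie in another group's subspace because it is orthogonal to that subspace and has norm one. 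A further small advantage of your route is that, by working with $P^TP$ centered at the sphere's center, you bypass the $Dw=e$/Gower machinery entirely. So: correct, same key lemma, but a more careful assembly that delivers the literal statement of Theorem \ref{thmkup1}.
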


In this note, we present simple linear algebraic proofs of \v{S}i\v{n}ajov\'{a}, Rankin and
 Kuperberg's theorems based on
spherical Euclidean distance matrices (EDMs) and the Perron-Frobenius theorem.
These proofs are given in Sections 3, 4 and 5 respectively, while the necessary background material is given in Section 2.

 \subsection{Notation}
 We collect here the notation used in this note. $e_n$ and $E_n$ denote, respectively, the vector of all 1's in $\Rs^n$
 and the matrix of all 1's of order $n$. $I_n$ denotes the identity matrix of order $n$. $e^i_n$ denotes the $i$th column of $I_n$.
 The subscript $n$, in $e_n$, $E_n$, $I_n$ and $e^i_n$  will be omitted if the dimension is clear from the context.
 For a matrix $A$, we denote the vector consisting of the diagonal entries of $A$ by $\diag(A)$. Also,
 for a real symmetric matrix $A$, we denote by $\lamx(A)$ and $m(\lamx(A))$, respectively,
 the largest eigenvalue of $A$ and its multiplicity. The zero vector or the zero matrix of the appropriate dimension
 is denoted by $\bz$.
 PSD stands for positive semidefinite.
 Finally, $E(G)$ denotes the edge set of a simple graph $G$.

\section{Preliminaries}

In this section we present the necessary background concerning EDMs and more specifically spherical EDMs.
For a comprehensive treatment of EDMs see the monograph \cite{alfm18}.

 An $n \times n$ matrix $D=(d_{ij})$ is said to be an EDM
 if there exist points $p^1,\ldots,p^n$ in some Euclidean space such that
\[
d_{ij}= || p^i - p^j ||^2 \mbox{ for all } i,j=1,\ldots, n,
\]
where $||x||$ denotes the Euclidean norm of $x$, i.e., $||x|| = \sqrt{x^Tx}$.
$p^1,\ldots,p^n$ are called the {\em generating points} of $D$ and the dimension of their affine span is
called the {\em embedding dimension} of $D$.
 If the embedding dimension of an $n \times n$ EDM $D$ is $n-1$, then we refer to $D$ as the EDM of a simplex.
For example, let $E$ and $I$ denote respectively the matrix of all 1's and the identity matrix. Then
the EDM $D = \gamma(E - I)$, where $\gamma$ is a positive scalar, is
 the EDM of a {\em regular simplex}. An EDM $D$ is said to be {\em spherical} if its generating points lie on a sphere.
A {\em unit} spherical EDM is a spherical EDM whose generating points lie on a sphere of radius $\rho = 1$.

Let $e$ denote the vector of all 1's in $\Rs^n$ and let $s$ be a vector in $\Rs^n$ such that $e^Ts= 1$.
The following theorem is a well-known characterization of EDMs \cite{sch35,yh38, gow85,cri88}.
\begin{thm} \label{thmEDM}
Let $D$ be an $n \times n$ real symmetric matrix whose diagonal entries are all 0's. Then $D$ is an EDM
if and only if
\beq  \label{defBs}
B= -\frac{1}{2} (I - e s^T) D (I - s e^T )
\eeq
is positive semidefinite (PSD), in which case, the embedding dimension of $D$ is given by rank($B$).
\end{thm}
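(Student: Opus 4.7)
The plan is a direct bidirectional computation that leverages a single algebraic identity, $(I - es^T)e = 0$, immediate from $e^Ts = 1$. This identity (and its transpose $e^T(I - se^T) = 0$) will make the sandwich operation $X \mapsto (I-es^T)X(I-se^T)$ annihilate any rank-one term of the form $ue^T$ or $eu^T$, which is precisely the mechanism needed to strip the squared-norm contributions out of the EDM formula and, in the other direction, to reconstruct generating points from $B$.

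For the ``only if'' direction I will start from an EDM $D$ with generating points $p^1,\dots,p^n$, collect them as columns of a matrix $P$, and expand $d_{ij} = \|p^i\|^2 + \|p^j\|^2 - 2(p^i)^Tp^j$ into the matrix identity $D = \diag(P^TP)\,e^T + e\,\diag(P^TP)^T - 2P^TP$. Applying the sandwich kills the first two terms and leaves $B = (P(I-se^T))^T(P(I-se^T))$, which is PSD by inspection. For the ``if'' direction, I will factor $B = Q^TQ$ with $Q\in\Rs^{r\times n}$, $r = \rank(B)$, and take $p^i$ to be the $i$th column of $Q$. Expanding the defining formula for $B$ entrywise yields $B_{ij} = -\tfrac12[d_{ij} - (Ds)_i - (Ds)_j + s^TDs]$, and together with $d_{ii}=0$ this collapses to $B_{ii}+B_{jj}-2B_{ij} = d_{ij}$, so $\|p^i-p^j\|^2 = d_{ij}$ as required.

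It remains to match the embedding dimension to $\rank(B)$, and here the companion identity $(I-se^T)s = 0$ does the work: it forces $Bs=0$, hence $s^TBs = \|Qs\|^2 = 0$ and therefore $\sum_i s_i p^i = Qs = 0$. Since $\sum_i s_i = 1$, this exhibits the origin as an affine combination of the $p^i$, so the affine and linear spans of $\{p^i\}$ coincide and both have dimension $\rank(Q) = \rank(B)$. I do not expect any step to be a serious obstacle; the only subtlety is keeping straight the two sandwich identities $(I-es^T)e=0$ and $(I-se^T)s=0$, which are exactly what allow the statement to hold for an arbitrary $s$ with $e^Ts=1$ rather than only the centroid choice $s = e/n$.
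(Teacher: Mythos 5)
Your proof is correct. Note that the paper does not prove Theorem \ref{thmEDM} at all---it is quoted as a classical result with citations to Schoenberg, Young--Householder, Gower and Critchley---so there is no internal proof to compare against; what you have written is essentially the standard argument from that literature, in its general form for an arbitrary $s$ with $e^Ts=1$ rather than the centroid $s=e/n$. Both directions check out: the identities $(I-es^T)e=\bz$ and $(I-se^T)s=\bz$ do exactly the work you claim, the entrywise computation $B_{ii}+B_{jj}-2B_{ij}=d_{ij}$ (using $d_{ii}=0$ and the symmetry of $D$) is right, and the observation $Bs=\bz\Rightarrow Qs=\bz$ correctly identifies the affine span with the column space of $Q$, giving embedding dimension $\rank(B)$. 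The only cosmetic remark is that your rank argument is run through the configuration built from $B$ in the ``if'' direction, so it implicitly uses that the embedding dimension is independent of the choice of generating points; if you want to avoid even that, your own ``only if'' factorization $B=(P(I-se^T))^T(P(I-se^T))$ already shows $\rank(B)$ equals the dimension of the affine span of any generating configuration, since the columns of $P(I-se^T)$ are the points translated by the affine combination $\sum_i s_i p^i$.
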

That is, $D$ is an EDM iff it is negative semidefinite on $e^{\perp}$, the orthogonal complement of $e$ in $\Rs^n$.
It can be easily shown that $B$ as defined in Equation (\ref{defBs}) is a Gram matrix of the generating points of $D$,
or a Gram matrix of $D$ for short.

Let $B$ be a Gram matrix of an EDM $D$ with rank $r$. Then $B$ is PSD and hence $B=PP^T$ for some $n \times r$ matrix $P$. Consequently,
$p^1, \ldots, p^n$, the generating points of $D$, are given by the rows of $P$.
As a result, $P$ is called a {\em configuration matrix} of $D$.
It should be noted that Equation (\ref{defBs}) implies that $Bs=\bz$ and hence $P^Ts=\bz$; that is
\beq \label{eqnsps}
\sum_{i=1}^n s_i p^i = \bz.
\eeq

It is well known \cite{gow85} that if $D$ is a nonzero EDM, then $e$ lies in the column space of $D$, i.e.,
there exists vector $w$ such that
\beq \label{defw}
Dw = e.
\eeq
It is also well known that if $D$ is an $n \times n$ EDM of a simplex, i.e., if the embedding dimension of $D$ is $n-1$,
then $D$ is spherical and nonsingular.
Among the many different characterizations of spherical EDMs, the one
given in the following theorem is the most relevant for our purpose.

\begin{thm}[\cite{gow82,gow85}]  \label{thmgow}
Let $D$ be an EDM and let $Dw =e$. Then $D$ is spherical if and only if
$e^Tw >0$, in which case, $\rho$, the radius of the sphere containing the generating points of $D$, is given by
\beq
\rho = \left( \frac{1}{2 e^T w } \right)^{1/2}.
\eeq
\end{thm}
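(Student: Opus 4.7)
The plan is to apply the Gram-matrix formula \eqref{defBs} from Theorem \ref{thmEDM} with a vector $s$ tied to $w$. First I would expand
\[
B_s := -\tfrac{1}{2}(I - es^T) D (I - se^T) = -\tfrac{1}{2}\bigl( D - Dse^T - es^TD + (s^TDs)\,ee^T \bigr),
\]
and, using that $\diag(D)=\bz$, read off
\[
\diag(B_s) = Ds - \tfrac{1}{2}(s^TDs)\,e.
\]
A factorization $B_s = PP^T$ with rows $p^1,\dots,p^n$ places the generating points on a sphere of radius $\rho$ centered at the origin precisely when this diagonal equals $\rho^2 e$; hence $D$ is spherical if and only if some $s$ with $e^Ts=1$ makes $Ds$ a scalar multiple of $e$. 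Indeed, when $Ds=\beta e$ and $e^Ts=1$, one has $s^TDs=\beta$ and therefore $\diag(B_s)=\tfrac{\beta}{2}\,e$, giving $\rho^2=\beta/2$.

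For the main case $e^Tw\neq 0$, the natural choice is $s := w/(e^Tw)$, which satisfies $e^Ts=1$ and $Ds = e/(e^Tw)$, so $\diag(B_s) = \frac{1}{2e^Tw}\,e$. Since $B_s$ is PSD by Theorem \ref{thmEDM}, its diagonal entries are nonnegative, which automatically forces $e^Tw > 0$; in this range, factoring $B_s = PP^T$ exhibits a configuration on the sphere of radius $\sqrt{1/(2e^Tw)}$, yielding the claimed formula for $\rho$.

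The case $e^Tw=0$ is where I expect the main subtlety, and I would handle it with a duality argument. Since $D$ is symmetric, $\N(D) = \C(D)^\perp$; and $Dw=e$ places $e$ in $\C(D)$, so every $v\in\N(D)$ satisfies $e^Tv=0$. Suppose, for contradiction, that $D$ were spherical: then by the reduction above there exists $s$ with $e^Ts=1$ and $Ds=\beta e$ for some $\beta\geq 0$. Subtracting, $D(s - \beta w) = 0$, so $s - \beta w \in \N(D)$ and hence $e^T(s-\beta w)=0$; this rearranges to $\beta\,e^Tw = 1$, which is impossible when $e^Tw=0$. Thus $e^Tw=0$ forces $D$ to be non-spherical, completing the equivalence.
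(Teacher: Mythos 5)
The paper never proves Theorem \ref{thmgow}; it is imported from Gower \cite{gow82,gow85}, so there is no in-paper argument to compare yours against. Judged on its own, your derivation from Theorem \ref{thmEDM} is the right kind of self-contained proof, and its computational core is correct: the identity $\diag(B_s)=Ds-\tfrac12 (s^TDs)e$, the choice $s=w/(e^Tw)$ giving $\diag(B_s)=\tfrac{1}{2e^Tw}\,e$, the conclusion $e^Tw>0$ from positive semidefiniteness of $B_s$, and the radius $\rho=(2e^Tw)^{-1/2}$ all check out.

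The one step you assert without justification is the ``only if'' half of your claimed equivalence: that if $D$ is spherical then \emph{some} $s$ with $e^Ts=1$ makes $Ds$ a multiple of $e$. Your diagonal computation only shows that such an $s$ produces a configuration on a sphere centered at the origin; it does not show that an arbitrary spherical configuration can be brought to that form, and this is exactly what your $e^Tw=0$ case leans on. To close it, note that if the generating points $q^i$ lie on a sphere centered at $c$, then projecting $c$ orthogonally onto their affine hull yields a sphere through the points whose center $c'$ lies in the affine hull, so $c'=\sum_i s_i q^i$ with $e^Ts=1$; translating by $-c'$ gives a configuration with $\sum_i s_i p^i=\bz$, whose Gram matrix is exactly $B_s$ (this is the paper's remark after Theorem \ref{thmEDM} together with Equation (\ref{eqnsps})), and then $\diag(B_s)=\rho^2 e$ forces $Ds\in\mathrm{span}\{e\}$. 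Alternatively you can bypass the reduction entirely: writing $D=2\rho^2E-2G$, where $G$ is the Gram matrix of the points re-centered at the sphere's center, the relations $Dw=e$ and $e^Tw=0$ give $-2Gw=e$, hence $w^TGw=0$, hence $Gw=\bz$ since $G$ is PSD, hence $e=\bz$, a contradiction. With either patch the argument is complete; it would also be worth one line to note that $e^Tw$ is independent of the choice of $w$ (your own observation $\N(D)=\C(D)^{\perp}$ and $e\in\C(D)$ gives this), so the criterion is well posed.
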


As an example consider $D= \gamma (E_n-I_n)$, the EDM of a regular simplex. Then
$w= e / (\gamma (n-1))$ and thus its generating points lie on a sphere of radius
$\rho = \sqrt{\gamma (n-1)/(2n)}$. Consequently, the $n \times n$ unit spherical EDM of a regular simplex is given by
\[
  D = \frac{2n}{n-1} (E_n - I_n).
\]

A vector $x$ is {\em positive}, denoted by $x > \bz$, if each of its entries is positive.
Similarly, a matrix $A$ is {\em positive} ({\em nonnegative}), denoted by $A > \bz$ ($A \geq \bz$), if each of its entries
is $> 0$ ($\geq 0$).
An $n \times n$ nonnegative matrix $A$ is said to be {\em reducible} if $A$ is the $1 \times 1$ zero matrix or
 if $n \geq 2$ and there exists a permutation matrix $Q$ such that
\[
 Q A Q^T = \left[ \begin{array}{cc} A_{11} & A_{12} \\
                                     \bz & A_{22} \end{array} \right],
\]
where $A_{11}$ and $A_{22}$ are square matrices.
It easily follows from the definition that if  $A$ is a  nonnegative symmetric reducible matrix of order $n \geq 2$,
then there exists a permutation matrix $Q$ such that
$Q A Q^T$ is a block diagonal matrix, of at least two blocks, such that each block is either irreducible or the $1 \times 1$ zero matrix.
A nonnegative matrix that is not reducible is {\em irreducible}.

It is well known that an $n \times n$ nonnegative matrix $A$ is irreducible if and only if $(I + A)^{n-1}> \bz$.
Moreover, if $A$ is the adjacency matrix of a simple graph $G$, then $A$ is irreducible if and only if $G$ is connected.
We will need the following fact from the celebrated Perron-Frobenius theorem: If $A$ is a nonnegative irreducible matrix, then
the largest eigenvalue of $A$, $\lamx(A)$, is positive with multiplicity $m(\lambda_{\max}(A)) = 1$ and
the eigenvector associated with $\lamx(A)$ is positive.

\section{Proof of \v{S}i\v{n}ajov\'{a} Theorem}

A connected component of a graph $G$ is said to be {\em nontrivial} if it consists of at least 2 nodes.
In other words, isolated nodes are trivial connected components of $G$.
Now let $p^i$ and $p^j$ be two unit vectors. Then, clearly, $(p^i)^Tp^j = 0$ if and only if  $||p^i-p^j||^2 = 2$ and
$(p^i)^Tp^j < 0$ if and only if  $||p^i-p^j||^2 > 2$. As a result,
Theorem \ref{thmsin} can be stated in the language of EDMs as follows.

\begin{thm}[\v{S}i\v{n}ajov\'{a} \cite{sin91}] \label{thmsin2}
Let $G$ be a simple graph on $n$ nodes and let $k$ be the number of its nontrivial connected components.
Then there exists a unit spherical EDM $D=(d_{ij})$ of embedding dimension $r= n-k$ such that
\beq \label{eqdijsin}
d_{ij}  \left\{ \begin{array}{ll} > 2  \mbox{ iff } \{i,j\} \in E(G), \\
                                  = 2  \mbox{ iff } \{i,j\} \not \in E(G),  \end{array} \right.
\eeq
where $E(G)$ denotes the edge set of $G$.
Furthermore, there does not exist a unit spherical EDM of embedding dimension $r \leq n-k-1$ that satisfies (\ref{eqdijsin}).
\end{thm}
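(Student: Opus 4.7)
The plan is to prove the two directions of the equality separately. In both directions, the key observation is that the partition of $G$ into $k$ nontrivial components $C_1,\dots,C_k$ (of sizes $m_1,\dots,m_k$) together with $n_0 = n - \sum_i m_i$ isolated nodes is mirrored by a block-diagonal structure on a suitable spherical Gram matrix, and the Perron--Frobenius theorem controls each block corresponding to a nontrivial $C_i$ via its (irreducible) adjacency matrix $A_i$.

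For the existence of a unit spherical EDM $D$ of embedding dimension $n-k$ satisfying~(\ref{eqdijsin}), I would build a Gram matrix directly. For each nontrivial $C_i$, Perron--Frobenius yields a simple positive eigenvalue $\lamx(A_i)$ with strictly positive eigenvector $v^{(i)}>\bz$. Setting $B^{(i)}:=I-A_i/\lamx(A_i)$ gives a PSD matrix of rank $m_i-1$ with unit diagonal and off-diagonal entries strictly negative at edges of $C_i$ and zero elsewhere. I then assemble $B$ as the block diagonal of the $B^{(i)}$'s together with $n_0$ singleton $[1]$ blocks for the isolated nodes, and set $d_{ij}:=2(1-B_{ij})$. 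Any factorization $B=PP^T$ has rows that are unit vectors in $\Rs^{n-k}$, so $D=(d_{ij})$ is a unit spherical EDM satisfying~(\ref{eqdijsin}) by the sign pattern of $B$. The embedding dimension equals $\rank B=\sum_i(m_i-1)+n_0=n-k$: the relation $\sum_u v^{(i)}_u p^u=\bz$, with $v^{(i)}>\bz$, rescales to an affine combination equal to $\bz$, which places the origin in the affine span of each $C_i$'s realization and forces the affine and linear dimensions of the full configuration to agree.

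For the lower bound, let $D$ be any unit spherical EDM of embedding dimension $r$ satisfying~(\ref{eqdijsin}), realized by unit vectors $p^1,\dots,p^n\in\Rs^r$ with the sphere center at the origin. The Gram matrix $B:=((p^i)^T p^j)$ is PSD with $\rank B=r$ and $\diag(B)=e$, and from $d_{ij}=2-2B_{ij}$ one reads off $B_{ij}<0$ at edges of $G$ and $B_{ij}=0$ at non-edges. In particular $B_{ij}=0$ whenever $i,j$ lie in distinct components of $G$, so after relabeling $B$ is block diagonal with a block $B^{(i)}$ per nontrivial $C_i$ and a $[1]$ block per isolated node. Now $M_i:=I-B^{(i)}$ is nonnegative, with support equal to the adjacency matrix of the connected nontrivial graph $C_i$, so $M_i$ is irreducible; Perron--Frobenius makes $\lamx(M_i)$ simple, and the PSD property of $B^{(i)}$ forces $\lamx(M_i)\le 1$. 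Hence the $1$-eigenspace of $M_i$, which equals the null space of $B^{(i)}$, is at most one-dimensional, so $\rank B^{(i)}\ge m_i-1$. Summing, $r=\rank B\ge\sum_i(m_i-1)+n_0=n-k$.

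The main obstacle is not any single hard step but the careful alignment between three objects: the embedding dimension of $D$, the rank of the correctly-centered Gram matrix, and the block structure inherited from the connected components of $G$. On the existence side, this alignment depends on exploiting the strict positivity of the Perron eigenvector $v^{(i)}$ to place the origin in the affine span, so that the linear and affine dimensions coincide; on the lower bound side, it depends on the fact that non-edges (whether within or between components) give exactly zero off-diagonal entries in $B$, which is what isolates each component into its own irreducible block.
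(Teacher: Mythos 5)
Your proof is correct, and at its core it is the paper's argument: the same construction $I-A_i/\lamx(A_i)$ blockwise for existence, and the same combination of component-wise block structure, Perron--Frobenius simplicity of the top eigenvalue of each irreducible block, and positive semidefiniteness for the rank count in the lower bound. The difference is packaging. The paper routes both halves through Lemma \ref{lemBDel} and Theorems \ref{thmEDM} and \ref{thmgow} (i.e., through $Dw=e$, $2e^Tw=1$ and the choice $s=2w$ in (\ref{defBs})), so that the rank of $B=I-\Delta$ \emph{is} the embedding dimension by definition of (\ref{defBs}); you instead work with the Gram matrix centered at the sphere's center and, in the existence half, recover the embedding dimension geometrically, using the strict positivity of the Perron eigenvector to put the origin in the affine hull so that affine and linear dimensions agree --- a step the paper gets for free from Theorem \ref{thmEDM}, but which in your version has the small bonus of exhibiting the circumcenter explicitly. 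The one place where you silently use the same input as the paper is the phrase ``realized by unit vectors in $\Rs^r$ with the sphere center at the origin'': this presumes the center of the radius-one sphere lies in the affine hull of the points (otherwise you only get unit vectors in $\Rs^{r+1}$ and the conclusion degrades to $r+1\ge n-k$). That is precisely what the paper's reading of ``unit spherical'' via Theorem \ref{thmgow} provides ($Dw=e$ with $2e^Tw=1$), so your argument is consistent with the paper's conventions, but it deserves an explicit sentence. Finally, like the paper's proof, yours implicitly assumes $k\ge 1$: with no nontrivial component there is no positive Perron eigenvector available to place the origin in the affine hull, and indeed the statement needs that restriction.
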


Before proving Theorem \ref{thmsin2}, we first prove the following lemma.

\begin{lem} \label{lemBDel}
Let $D$ be an $n \times n$ unit spherical EDM of embedding dimension $r$ and let $Dw=e$.
Let $D = 2 (E-I) + 2 \Delta$. Then
$\lamx (\Delta) = 1$ and $w$ is an eigenvector associated with $\lamx(\Delta)$. Moreover,
$r=n - m(\lamx(\Delta))$, where $m(\lamx(\Delta))$ denotes the multiplicity of $\lamx(\Delta)$.
\end{lem}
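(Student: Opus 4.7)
The strategy is to identify $I - \Delta$ with a Gram matrix of $D$ of the form given in Theorem \ref{thmEDM}, specifically the one obtained by centering the generating points at the sphere's center. Once this identification is made, both assertions of the lemma fall out from positive semidefiniteness and rank properties of that Gram matrix.

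First, I would use Theorem \ref{thmgow} with $\rho = 1$ to record the normalization $e^Tw = 1/2$. Substituting $D = 2(E-I) + 2\Delta$ into $Dw = e$ and using $Ew = (e^Tw)e = e/2$ yields $e - 2w + 2\Delta w = e$, hence $\Delta w = w$. Since $Dw = e \neq \bz$ forces $w \neq \bz$, we conclude that $1$ is an eigenvalue of $\Delta$ with eigenvector $w$.

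Next, I would apply Theorem \ref{thmEDM} with the specific choice $s = 2w$, which satisfies $e^Ts = 2e^Tw = 1$. Using $Dw = e$ (so $w^TD = e^T$ by symmetry of $D$), a short direct expansion collapses $(I - es^T)D(I - se^T)$ to $D - 2E$, so $B = -\tfrac{1}{2}(I - es^T)D(I - se^T) = E - \tfrac{1}{2}D$. Substituting $\tfrac{1}{2}D = (E-I) + \Delta$ then gives $B = I - \Delta$. By Theorem \ref{thmEDM}, this $B$ is PSD with $\rank B = r$, so every eigenvalue of $\Delta$ is at most $1$, i.e., $\lamx(\Delta) \leq 1$. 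Combined with the previous paragraph, $\lamx(\Delta) = 1$ with associated eigenvector $w$.

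For the multiplicity assertion, since $B = I - \Delta$ the eigenvalues of $B$ are precisely $1 - \lambda_i(\Delta)$, so the multiplicity of $\lamx(\Delta) = 1$ as an eigenvalue of $\Delta$ equals the multiplicity of $0$ as an eigenvalue of $B$, which is $n - \rank B = n - r$. This yields $r = n - m(\lamx(\Delta))$. The only non-routine step is the identification $B = I - \Delta$, which hinges on the clever choice $s = 2w$; everything else is short linear-algebraic bookkeeping.
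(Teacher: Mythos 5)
Your proposal is correct and follows essentially the same route as the paper: choosing $s=2w$ (justified by $2e^Tw=1$ from Theorem \ref{thmgow}) to identify the Gram matrix as $B=E-\tfrac{1}{2}D=I-\Delta$, deducing $\lamx(\Delta)\leq 1$ from positive semidefiniteness, $\Delta w = w$ from $Dw=e$, and the multiplicity claim from $r=\rank B$. The only cosmetic difference is that you derive $\Delta w=w$ directly from $Dw=e$ rather than from $Bw=\bz$, which is the same computation.
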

\begin{proof}
By Theorem \ref{thmgow}, $2 e^T w = 1$. Thus by setting $s = 2 w$ in Equation (\ref{defBs}),
it follows that the corresponding Gram matrix of $D$ is
\beq
  B = E - \frac{1}{2}D = I - \Delta.
\eeq
Hence $\lamx(\Delta) \leq 1$ since $B$ is PSD.
On the other hand, $Bw=\bz$ implies that
\beq
\Delta w = w.
\eeq
Hence $\lamx(\Delta) \geq 1$ and consequently $\lamx(\Delta) = 1$.
As a result, $r = \rank (B)= n - m(\lamx(\Delta))$.
\end{proof}

Now we are ready to prove Theorem \ref{thmsin2}.

\begin{proof}[Proof of Theorem \ref{thmsin2}]
Let $A$ denote the adjacency matrix of $G$.
Then there exists a permutation matrix $Q$ such that
\beq \label{QAQT}
Q A Q^T = \left[ \begin{array}{cccc} A^1 &  &   & \\
                                        &   \ddots & & \\
                                      &  &   A^{k} & \\
                                      & & & \bz \end{array} \right],
\eeq
where $A^1, \ldots, A^k$ denote the adjacency matrices of the nontrivial connected components of $G$.
Hence, $A^1, \ldots, A^k$ are irreducible nonnegative matrices of orders $\geq 2$.
Therefore, by the Perron-Frobenius theorem, $m(\lamx(A^1))=\cdots = m(\lamx(A^k)) =1$.
For $i=1,\ldots,k$, let $\xi^i$ denote the eigenvector of $A^i$ associated with $\lamx(A^i)$ and
let $\Delta^i = A^i/\lamx(A^i)$. Further,
let
\[
 \Delta = \left[ \begin{array}{cccc} \Delta^1 &  &   & \\
                                        &   \ddots & & \\
                                      &  &   \Delta^{k} & \\
                                      & & & \bz \end{array} \right], \;\;
 \xi = \left[ \begin{array}{c} \xi^1  \\ \vdots \\ \xi^{k} \\ \bz \end{array} \right] \mbox{ and } w = \frac{\xi}{2 e^T \xi}.
\]
Then, obviously, $\Delta_{ij} > 0$  if and only if $\{i,j\} \in E(G)$ and $\Delta_{ij} = 0$  if and only if
 $i=j$ or  $\{i,j\} \not \in E(G)$. Also, it is equally obvious that
$\lamx(\Delta)=1$, $m(\lamx(\Delta))=k$ and $\Delta w = w$.

Let $D = 2(E-I)+2 \Delta$. Then $Dw= e$ since $2 e^Tw = 1$. Now if we let $s=2 w$ in Equation (\ref{defBs}), then
\[
B = -\frac{1}{2} (I-e s^T) D (I-se^T) =  E - \frac{1}{2} D = I - \Delta
\]
is PSD and of rank $n-k$. As a result, by Theorems \ref{thmEDM} and \ref{thmgow},
$D$ is a unit spherical EDM of embedding dimension $r=n-k$ that satisfies (\ref{eqdijsin}).

To complete the proof, let $r$ be the embedding dimension of any unit spherical EDM $D$ that satisfies (\ref{eqdijsin}).
Let $\Delta = D/2+I-E$ and wlog assume that $\Delta$ is block diagonal. Thus $\Delta$ has $k$ irreducible nonnegative diagonal blocks,
each associated with a nontrivial connected component of $G$. Now it follows from Lemma \ref{lemBDel} that
$\lamx(\Delta) = 1$ and $r = n - m(\lamx(\Delta))$. Consequently, $r \leq n-k$ since the contribution from each irreducible
diagonal block of $\Delta$ to $m(\lamx(\Delta))$ is at most 1.
\end{proof}

\section{Proof of Rankin's Theorems}

Theorems \ref{thmran1} and \ref{thmran2} can be stated in the language of EDMs as follows.

\begin{thm}[Rankin \cite{ran55}] \label{thmran12}
Let $D$ be an $n \times n$ unit spherical EDM of embedding dimension $r$.
If $n=r+2$, then at least one off-diagonal entry of $D$ is  $\leq 2$.
\end{thm}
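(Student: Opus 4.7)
The plan is to argue by contradiction using Lemma \ref{lemBDel} and the Perron--Frobenius theorem, in direct parallel to the proof of Theorem \ref{thmsin2}. Suppose, toward a contradiction, that every off-diagonal entry of $D$ satisfies $d_{ij} > 2$. Writing $D = 2(E-I) + 2\Delta$ as in Lemma \ref{lemBDel}, we have $\Delta_{ii} = 0$ and $\Delta_{ij} = d_{ij}/2 - 1$ for $i \neq j$. Under the contradiction hypothesis, every off-diagonal entry of $\Delta$ is strictly positive, so $\Delta$ is a nonnegative symmetric matrix whose off-diagonal support is that of the complete graph $K_n$; in particular, $\Delta$ is irreducible.

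Next I would apply the Perron--Frobenius theorem to $\Delta$: since $\Delta$ is nonnegative and irreducible, its largest eigenvalue is simple, i.e., $m(\lamx(\Delta)) = 1$. Lemma \ref{lemBDel} gives $\lamx(\Delta) = 1$ and
\[
r = n - m(\lamx(\Delta)) = n - 1.
\]
This contradicts the hypothesis $n = r + 2$ (equivalently $r = n - 2$), completing the argument.

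There is no real obstacle here beyond recognizing that the strict inequality $d_{ij} > 2$ for every pair $i \neq j$ translates exactly into the irreducibility of the nonnegative matrix $\Delta$, after which Lemma \ref{lemBDel} and Perron--Frobenius do all the work. The only subtle point worth double-checking is that $\Delta$ is genuinely nonnegative in the contradiction scenario (it is, because every off-diagonal $\Delta_{ij}$ equals $d_{ij}/2 - 1 > 0$ and the diagonal vanishes), so the Perron--Frobenius hypothesis is valid.
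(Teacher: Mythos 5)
Your proof is correct and is essentially the paper's own argument: the paper simply routes the same steps through Lemma \ref{lemsimplex}, which is nothing but the combination of Lemma \ref{lemBDel} and the Perron--Frobenius theorem that you spell out directly. In both cases the strict inequalities $d_{ij}>2$ make $\Delta$ irreducible, forcing $m(\lamx(\Delta))=1$ and hence $r=n-1$, contradicting $r=n-2$.
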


\begin{thm}[Rankin \cite{ran55}] \label{thmran22}
Let $D$ be an $n \times n$ unit spherical EDM of embedding dimension $r$.
If $n=2r$ and if each off-diagonal entry of $D$ is $\geq 2$, then there exists a permutation matrix $Q$ such that
\beq \label{eqnEDMcp}
QDQ^T = \left[ \begin{array}{cccc} 4(E_2-I_2) & 2E_2 & \cdots & 2E_2 \\
                                     2 E_2 & 4(E_2-I_2) & \cdots & 2E_2  \\
                                     \vdots & \vdots &  \ddots & \vdots \\
                                     2E_2 & \cdots & 2E_2 &   4(E_2-I_2) \end{array} \right],
\eeq
where $E_2$, $I_2$ are, respectively, the matrix of all 1's and the identity matrix of orders $2$.
\end{thm}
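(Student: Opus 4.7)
The plan is to run a parallel argument to the proof of Theorem \ref{thmsin2}, invoking the decomposition $D = 2(E-I) + 2\Delta$ of Lemma \ref{lemBDel}. Since each off-diagonal entry of $D$ is at least $2$, the matrix $\Delta$ is nonnegative and symmetric with zero diagonal. Lemma \ref{lemBDel} then gives $\lamx(\Delta) = 1$ with multiplicity $m(\lamx(\Delta)) = n - r = r$, where the last equality uses $n = 2r$.

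Next, I would exploit the reducibility structure of $\Delta$. There exists a permutation matrix $Q$ such that $Q\Delta Q^T$ is block diagonal, each block being either a $1 \times 1$ zero matrix or an irreducible nonnegative matrix of size at least $2$ with zero diagonal. Let $k$ be the number of irreducible blocks, with sizes $n_1, \ldots, n_k \geq 2$, and let $n_0$ be the number of isolated zero diagonal entries, so $n = n_0 + \sum_{i=1}^k n_i \geq n_0 + 2k$. By the Perron-Frobenius theorem, each irreducible block has a simple positive largest eigenvalue; it contributes $1$ to $m(\lamx(\Delta))$ precisely when its own largest eigenvalue equals $1$, and contributes $0$ otherwise, while the zero blocks contribute nothing to the multiplicity of $1$. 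Hence $r = m(\lamx(\Delta)) \leq k$, and combining with $2r = n \geq n_0 + 2k \geq 2k \geq 2r$ collapses the chain to equality throughout, forcing $n_0 = 0$, $k = r$, $n_i = 2$ for all $i$, and $\lamx = 1$ for every irreducible diagonal block.

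Every $2 \times 2$ irreducible nonnegative matrix with zero diagonal has the form $\left[\begin{array}{cc} 0 & a \\ a & 0 \end{array}\right]$ with $a > 0$, and its largest eigenvalue is $a$; so each diagonal block of $Q\Delta Q^T$ must equal $E_2 - I_2$. Partitioning the identity $QDQ^T = 2(E_n - I_n) + 2\,Q\Delta Q^T$ into $r$ blocks of size $2 \times 2$, the diagonal blocks become $2(E_2 - I_2) + 2(E_2 - I_2) = 4(E_2 - I_2)$ and the off-diagonal blocks become $2 E_2 + \bz = 2 E_2$, which is exactly the matrix in (\ref{eqnEDMcp}).

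The main hurdle is the multiplicity bookkeeping that converts ``$\lamx(\Delta) = 1$ with multiplicity $r$'' into the rigid conclusion that $\Delta$ decomposes into $r$ Perron blocks of size $2$. The content is the squeezing chain $r = m(\lamx(\Delta)) \leq k$ and $n = 2r \geq n_0 + 2k \geq 2r$, from which every inequality is forced to be an equality; once this is in place, the cross-polytope structure drops out immediately.
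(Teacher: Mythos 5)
Your proof is correct. It rests on the same machinery as the paper---the splitting $D = 2(E-I)+2\Delta$, Lemma \ref{lemBDel}, the Perron--Frobenius theorem, and the block-diagonal decomposition of a nonnegative symmetric reducible matrix---but it reaches the conclusion by a different route: the paper obtains Theorem \ref{thmran22} as a corollary of Kuperberg's Theorem \ref{thmkup2} (whose proof in turn uses Lemmas \ref{lemsimplex} and \ref{lemsimplex2}), noting that when $n=2r$ the $n-r=r$ simplex blocks must each have order $2$ and hence equal $4(E_2-I_2)$, whereas you specialize to $n=2r$ from the outset and replace the simplex lemmas by the squeeze $r=m(\lamx(\Delta))\le k$ together with $2r=n\ge n_0+2k\ge 2k\ge 2r$. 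A genuine benefit of your bookkeeping is that it explicitly disposes of irreducible diagonal blocks whose largest eigenvalue is strictly less than $1$, and of zero rows, possibilities that the paper's proof of Theorem \ref{thmkup2} passes over rather quickly when it writes $Q\Delta Q^T$ in the form (\ref{QDelQT}) with exactly $n-r$ irreducible blocks; in the case $n=2r$ your counting makes that step airtight and self-contained. What the paper's route buys in exchange is generality: the same argument yields Kuperberg's theorem for the whole range $r+2\le n\le 2r$, with both of Rankin's theorems falling out as special cases, while your direct argument is tailored to $n=2r$.
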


It should be noted that the RHS of Equation (\ref{eqnEDMcp}) is the EDM of the regular $r$-crosspolytope.
As was mentioned earlier, Theorems \ref{thmran1} and \ref{thmran2} are special cases of Theorem \ref{thmkup1} which we prove
in the next section. However, in this section, we present an independent proof of Theorem \ref{thmran1} after we have proved
the following lemma which will be needed in the sequel.

\begin{lem} \label{lemsimplex}
Let $D$ be an $n \times n$ unit spherical EDM of embedding dimension $r$ and assume that each off-diagonal entry of $D$ is $\geq 2$.
Let $D = 2 (E-I) + 2 \Delta$ and let $Dw=e$.
If $\Delta$ is irreducible,  then $r = n-1$, i.e.,
$D$ is the EDM of a simplex, and $w > \bz$.
\end{lem}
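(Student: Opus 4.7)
The plan is to combine Lemma \ref{lemBDel} with the Perron--Frobenius theorem applied to $\Delta$. First I observe that the hypothesis ``each off-diagonal entry of $D$ is $\geq 2$'' translates, via the identity $D = 2(E-I) + 2\Delta$, into the statement that $\Delta$ is a nonnegative symmetric matrix with zero diagonal. Hence $\Delta$ is a nonnegative irreducible matrix, and the Perron--Frobenius theorem applies.

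Next I would invoke Lemma \ref{lemBDel} directly: since $D$ is a unit spherical EDM of embedding dimension $r$, we have $\lamx(\Delta)=1$, the vector $w$ satisfies $\Delta w = w$, and
\[
r = n - m(\lamx(\Delta)).
\]
By Perron--Frobenius applied to the irreducible nonnegative matrix $\Delta$, the largest eigenvalue $\lamx(\Delta)=1$ is simple, i.e., $m(\lamx(\Delta))=1$. Substituting into the formula for $r$ gives $r = n-1$, so $D$ is indeed the EDM of a simplex.

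Finally, to obtain $w > \bz$, I would use the second half of the Perron--Frobenius conclusion: the one-dimensional eigenspace of $\lamx(\Delta)$ is spanned by a strictly positive vector. Since $\Delta w = w$, the vector $w$ is a scalar multiple of this positive eigenvector; the sign of the scalar is fixed by Theorem \ref{thmgow}, which guarantees $e^T w = 1/(2\rho^2) = 1/2 > 0$. Therefore the scalar is positive and $w > \bz$, completing the proof.

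There is essentially no serious obstacle here: once the translation between the off-diagonal entries of $D$ and the nonnegativity of $\Delta$ is made, the conclusion is an immediate consequence of Lemma \ref{lemBDel} and the standard Perron--Frobenius statement recalled at the end of Section 2. The only small point to remember is to fix the sign of $w$ using $e^T w > 0$.
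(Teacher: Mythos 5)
Your proof is correct and follows essentially the same route as the paper: translate the distance hypothesis into nonnegativity of $\Delta$, apply Lemma \ref{lemBDel} to get $\lamx(\Delta)=1$, $\Delta w = w$ and $r = n - m(\lamx(\Delta))$, and then use Perron--Frobenius for simplicity of the eigenvalue and positivity of the eigenvector. Your extra step of fixing the sign of $w$ via $e^Tw = 1/2 > 0$ is a small refinement the paper leaves implicit, but it is the same argument.
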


\begin{proof}
Clearly, $\Delta \geq \bz$. Thus, it follows from Lemma \ref{lemBDel} and the Perron-Frobenius theorem
that $\lamx(\Delta)=1$, $m(\lamx(\Delta))= 1$ and  $w > \bz$.
Consequently, $r = \rank(B) = n-1$.
\end{proof}

Now Theorem \ref{thmran1} is an immediate corollary of Lemma \ref{lemsimplex}.

\begin{proof}[Proof of Theorem \ref{thmran12}]
Let $\Delta = D/2 + I - E$ and
assume, by way of contradiction, that each off-diagonal entry of $D$ is $> 2$. Then each off-diagonal entry of $\Delta$ is $> 0$.
Hence, $I + \Delta > 0$ and thus $\Delta$ is irreducible. Consequently, by Lemma \ref{lemsimplex},
the embedding dimension of $D$ is $r=n-1$, which contradicts the assumption that $r=n-2$.
\end{proof}

\section{Proof of Kuperberg's Theorem}

Theorem \ref{thmkup1} can be stated in the language of EDMs as follows.

\begin{thm}[Kuperberg \cite{kup07}]  \label{thmkup2}
Let $D$ be an $n \times n$ unit spherical EDM of embedding dimension $r$,
where $2 \leq n- r \leq r$. If each off-diagonal entry of $D$ is $\geq 2$,
then there exists a permutation matrix $Q$ such that
\[
Q D Q^T = \left[ \begin{array}{cccc} D^1 & 2E & \cdots & 2E \\
                                     2E   & D^2  & \cdots & 2E \\
                                     \vdots & \vdots &  \ddots & \vdots \\
                                     2E & \cdots & 2E &  D^{n-r} \end{array} \right],
\]
where $D^1, \ldots, D^{n-r}$ are unit spherical EDMs of simplices; and
$E$ is the matrix of all 1's of the appropriate dimension.
\end{thm}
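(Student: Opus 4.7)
The plan is to adapt the spectral approach used for \v{S}i\v{n}ajov\'{a}'s theorem. Set $\Delta := D/2 + I - E$; since every off-diagonal entry of $D$ is at least $2$, the matrix $\Delta$ is nonnegative symmetric with zero diagonal, and Lemma~\ref{lemBDel} gives $\lamx(\Delta) = 1$ and $m(\lamx(\Delta)) = n - r$. After a permutation I may assume $\Delta$ is block diagonal, with each block either irreducible of order $\geq 2$ or a $1 \times 1$ zero matrix. For every irreducible block $\Delta^T$ ($T \subseteq \{1,\ldots,n\}$, $|T|\geq 2$), the principal submatrix $D^T$ of $D$ is itself a unit spherical EDM (its generating points still lie on the unit sphere) and its associated $\Delta$ matrix is exactly $\Delta^T$; Lemma~\ref{lemBDel} and the Perron--Frobenius theorem then give $\lamx(\Delta^T)=1$ with multiplicity one and a positive Perron eigenvector. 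A $1 \times 1$ zero block contributes $\lamx = 0$. Matching this against $m(\lamx(\Delta)) = n - r$, I conclude that $\Delta$ has exactly $n - r$ irreducible non-trivial blocks $\Delta^{T_1},\ldots,\Delta^{T_{n-r}}$, plus a (possibly empty) collection of singleton blocks corresponding to points of $p$ orthogonal to every other point.

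Next I would form the Kuperberg partition by distributing the singleton indices arbitrarily among $T_1,\ldots,T_{n-r}$, producing a partition $\{S_1,\ldots,S_{n-r}\}$ of $\{1,\ldots,n\}$ with $S_j \supseteq T_j$. After permuting the rows and columns of $D$ accordingly, the off-diagonal blocks are automatically the required all-$2$ matrices, since $\Delta_{ij}=0$ whenever $i,j$ lie in distinct $\Delta$-blocks and hence $d_{ij}=2$. For each $j$, let $w^{T_j}$ be the positive Perron eigenvector of $\Delta^{T_j}$ normalized so that $\Delta^{T_j}w^{T_j}=w^{T_j}$ and $e^Tw^{T_j}=1/2$, and extend by zero on $S_j \setminus T_j$ to obtain $\tilde w^j$. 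A short block computation, using that every entry of $D^{S_j}$ whose indices straddle $T_j$ and $S_j \setminus T_j$ (or lie in $S_j \setminus T_j$ and are off-diagonal) equals $2$, yields $D^{S_j}\tilde w^j = e$ with $e^T\tilde w^j = 1/2$; by Theorem~\ref{thmgow}, $D^{S_j}$ is a unit spherical EDM. Its associated $\Delta$ is the block-diagonal matrix $\Diag(\Delta^{T_j},\bz)$, whose eigenvalue $1$ has multiplicity one, so Lemma~\ref{lemBDel} identifies the embedding dimension of $D^{S_j}$ as $|S_j|-1$, i.e., $D^{S_j}$ is the EDM of a simplex.

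The main obstacle is that the Kuperberg blocks need \emph{not} coincide with the irreducible blocks of $\Delta$: a valid configuration may contain points orthogonal to every other point (for instance, in $\Rs^3$, the point $e^3$ together with the four points $\pm e^1, \pm e^2$), producing singleton blocks in $\Delta$. Absorbing these singletons into non-trivial blocks in order to obtain exactly $n-r$ diagonal blocks, and then verifying via the block computation above that each merged sub-EDM is still a unit spherical EDM of a simplex, is the technical heart of the argument; the hypothesis $n-r \leq r$ enters only through the fact (Rankin) that such configurations exist at all.
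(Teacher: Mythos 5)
Your overall strategy is the same as the paper's (pass to the block structure of $\Delta=D/2+I-E$, use Lemma~\ref{lemBDel} and Perron--Frobenius, then absorb leftover blocks), but the step in which you classify the blocks has a genuine gap. You claim that for every irreducible block $\Delta^T$ of order $\geq 2$ the principal submatrix $D^T$ is itself a unit spherical EDM, so that Lemma~\ref{lemBDel} forces $\lamx(\Delta^T)=1$. That transfer is invalid: the radius in Theorem~\ref{thmgow}, and hence in Lemma~\ref{lemBDel}, is the radius of the sphere centered in the affine hull of the generating points, and for a sub-configuration of points of the unit sphere the origin need not lie in that affine hull; $D^T$ is spherical, but in general of radius $<1$. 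Concretely, take $n=6$, $r=4$ and the points $\pm e^1,\pm e^2,q^1,q^2$ in $\Rs^4$, where $q^1,q^2$ are unit vectors in the span of $e^3,e^4$ with $(q^1)^Tq^2=-c$, $0<c<1$. All hypotheses hold ($n-r=2$, every off-diagonal entry of $D$ is $\geq 2$), yet $\Delta$ has \emph{three} irreducible $2\times 2$ blocks, and the block $\left[\begin{array}{cc} 0 & c\\ c & 0\end{array}\right]$ coming from $q^1,q^2$ has largest eigenvalue $c<1$ (its two-point EDM has circumradius $\sqrt{(1+c)/2}<1$). So your inventory ``exactly $n-r$ nontrivial irreducible blocks plus singletons'' is false, and the difficulty you isolate at the end is not confined to points orthogonal to all others: whole irreducible blocks with $\lamx<1$ can occur and must be absorbed as well. (Note also that such a block cannot be split between two of the final groups, since its internal entries exceed $2$; unlike singletons, leftover blocks must be kept intact.)

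The repair is essentially your own merging computation, correctly scoped. Lemma~\ref{lemBDel} plus Perron--Frobenius gives only that the blocks attaining $\lamx=1$ number exactly $n-r$ (each irreducible block contributes at most one to the multiplicity); call everything else --- singletons and irreducible blocks with $\lamx<1$ alike --- leftover, and distribute the leftover blocks, kept whole, among the $n-r$ eigenvalue-$1$ blocks. Since $\tilde w^j$ vanishes off $T_j$ and every entry of $D$ joining distinct $\Delta$-blocks equals $2$, your verification $D^{S_j}\tilde w^j=e$, $e^T\tilde w^j=1/2$ goes through verbatim, and the $\Delta$-matrix of $D^{S_j}$ is (after a permutation) $\Diag(\Delta^{T_j},\Delta')$ with $\lamx(\Delta')<1$, so eigenvalue $1$ still has multiplicity one and $D^{S_j}$ is a unit spherical EDM of a simplex. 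This is exactly the padding device of the paper's Lemma~\ref{lemsimplex2}, stated there for a zero leftover block; with this correction your argument coincides with the paper's proof, whose displayed decomposition~(\ref{QDelQT}) likewise records only the eigenvalue-$1$ blocks together with a zero block, so the case of $\lamx<1$ blocks is precisely the point that needs the extra care you attempted to supply.
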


Two remarks are in order here.
First, as shown in \cite{kup07}, if $n=r+2$, then Theorem \ref{thmkup2} reduces to Rankin's Theorem \ref{thmran12}. This follows since
 if $D$ has an off-diagonal entry $< 2$, then there is nothing to prove. On the other hand, if every off-diagonal entry of $D$
is $\geq 2$, then Theorem \ref{thmkup2} implies that there is a permutation matrix $Q$ such that
$Q D Q^T = \left[ \begin{array}{cc} D^1 & 2E \\ 2E & D^2 \end{array} \right]$. Hence, at least one of the off-diagonal diagonal entries of $D$
is $2$  since $2E$ is a submatrix of $Q D Q^T$.

Second, also, as shown in \cite{kup07}, if $n=2r$, i.e., if $n-r = r$, then Theorem \ref{thmkup2} reduces to Rankin's Theorem \ref{thmran22}.
This follows since in this case, each of the submatrices $D^1,\ldots,D^r$ in Theorem \ref{thmkup2} is of order $2$, and thus
$D^1=\cdots=D^r= 4(E_2-I_2)$.  Therefore,
the configuration, in this case,  is that of the regular $r$-crosspolytope since
the matrix $Q D Q^T$ in Theorem \ref{thmkup2} reduces to that in Theorem \ref{thmran22}.

Before presenting the proof of Theorem \ref{thmkup2}, we need the following lemma which extends Lemma \ref{lemsimplex}
to the case where $\Delta$ is padded with zero rows and columns.

\begin{lem} \label{lemsimplex2}
Let $D$ be an $n \times n$ unit spherical EDM of embedding dimension $r$ and assume that  each off-diagonal entry of $D$ is $\geq 2$.
Let $ D = 2(E-I) + 2 \tilde{\Delta}$ and let $D \tilde{w} =e$.
If $\tilde{\Delta} = \left[ \begin{array}{cc} \Delta & \bz \\ \bz & \bz \end{array} \right]$,
where  $\Delta$ is irreducible, then $r=n-1$,
i.e., $D$ is the EDM of a simplex, and
$\tilde{w} = \frac{1}{2 e^T \xi} \left[ \begin{array}{c} \xi \\ \bz \end{array} \right]$, where $\Delta \xi = \xi$ and $\xi > \bz$.
\end{lem}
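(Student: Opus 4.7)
The plan is to deduce the result directly from Lemma \ref{lemBDel} combined with the Perron-Frobenius theorem applied to the irreducible block $\Delta$, treating the zero padding separately via the block-diagonal structure of $\tilde{\Delta}$.

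First, I would observe that $\tilde{\Delta}$ is a nonnegative symmetric matrix with zero diagonal (since the off-diagonal entries of $D$ are $\geq 2$ and the diagonal entries of $D$ vanish). By Lemma \ref{lemBDel} applied to $\tilde{\Delta}$, we have $\lamx(\tilde{\Delta}) = 1$, the vector $\tilde{w}$ is an eigenvector of $\tilde{\Delta}$ associated with this eigenvalue, and $r = n - m(\lamx(\tilde{\Delta}))$. The task then reduces to computing $m(\lamx(\tilde{\Delta}))$ and pinning down $\tilde{w}$.

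Next, because $\tilde{\Delta}$ is block diagonal with blocks $\Delta$ and $\bz$, its spectrum is the union (with multiplicities) of the spectrum of $\Delta$ and zero eigenvalues coming from the zero block. Since $\Delta$ is irreducible and nonnegative with zero diagonal, it has order at least $2$, so Perron-Frobenius gives $\lamx(\Delta) > 0$, $m(\lamx(\Delta)) = 1$, and a corresponding positive eigenvector $\xi > \bz$. Comparing the two spectra forces $\lamx(\Delta) = \lamx(\tilde{\Delta}) = 1$ and $m(\lamx(\tilde{\Delta})) = m(\lamx(\Delta)) = 1$, because the zero block contributes only to the eigenvalue $0 \neq 1$. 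Therefore $r = n-1$, so $D$ is the EDM of a simplex.

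Finally, I would determine $\tilde{w}$. Any eigenvector of $\tilde{\Delta}$ for the eigenvalue $1$ must, because of the block structure and the fact that $0$ is not an eigenvalue of $1$, have its lower block entries equal to $\bz$ and its upper block entries equal to a $1$-eigenvector of $\Delta$; by Perron-Frobenius the latter is a scalar multiple of $\xi$. Hence $\tilde{w} = c \bigl[ \xi^T \; \bz^T \bigr]^T$ for some scalar $c$. Since $D$ is a unit spherical EDM, Theorem \ref{thmgow} gives $2 e^T \tilde{w} = 1$, which forces $c = 1/(2 e^T \xi)$ (note $e^T \xi > 0$ because $\xi > \bz$), yielding the claimed form of $\tilde{w}$.

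No step looks particularly delicate; the only point requiring a bit of care is justifying that the $1$-eigenspace of $\tilde{\Delta}$ has no contribution from the zero block, which is immediate once one notes that $\lamx(\Delta) = 1 \neq 0$.
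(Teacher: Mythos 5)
Your proof is correct and follows exactly the route the paper intends: the paper only says the proof is ``similar to that of Lemma \ref{lemsimplex}'', i.e.\ apply Lemma \ref{lemBDel} to $\tilde{\Delta}$ and the Perron--Frobenius theorem to the irreducible block $\Delta$, noting the zero block contributes only the eigenvalue $0$. Your write-up fills in those details accurately, including the normalization $2e^T\tilde{w}=1$ from Theorem \ref{thmgow} that pins down $\tilde{w}$.
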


\begin{proof}
 The proof is similar to that of Lemma \ref{lemsimplex}.
\end{proof}

Now we are ready to prove  Theorem \ref{thmkup2}.

\begin{proof}[Proof of Theorem \ref{thmkup2}]

Let $D= 2(E-I) + 2 \Delta $ and thus $\Delta \geq \bz$ and $\diag(\Delta) = \bz$.
Since the embedding dimension of $D$ is $r$, it follows from Lemma \ref{lemBDel}
 that $\lamx(\Delta) = 1$ with multiplicity $m(\lamx(\Delta))= n-r \geq 2$.
Therefore, by the Perron-Frobenius theorem, $\Delta$ is reducible and thus there exists a permutation matrix $Q$
such that
\beq \label{QDelQT}
Q \Delta Q^T = \left[ \begin{array}{ccc} \Delta^1   &  &  \\
                                           &  \ddots &  \\
                                      &   & \Delta^{n-r} \end{array} \right] \mbox{ or }
 \left[ \begin{array}{cccc} \Delta^1 &  &   & \\
                                        &   \ddots & & \\
                                      &  &   \Delta^{n-r} & \\
                                      & & & \bz \end{array} \right],
\eeq
where $\Delta^1, \ldots, \Delta^{n-r}$ are irreducible and thus $\lamx(\Delta^1) = \cdots = \lamx(\Delta^{n-r})= 1$.
For $i=1,\ldots,n-r$, let $\xi^i$ denote the eigenvector of  $\Delta^i$ associated with $\lamx(\Delta^i)$.
Therefore, by the Perron-Frobenius theorem $\xi^i > \bz$ since $\Delta^i$ is irreducible.
Next, we consider the two cases of $Q \Delta Q^T$ in Equation (\ref{QDelQT}) separately.

In the first case, all diagonal blocks of $\Delta$ are irreducible.
Assume that, for $i=1,\ldots,n-r$, $\Delta^i$ is of order $n_i$ where $\sum_{i=1}^{n-r} n_i = n$. Then
$n_i \geq 2$ since $\diag(\Delta^i)=\bz$.
Let $D^i = 2(E_{n_i} - I_{n_i}) + 2 \Delta^i$ for $i=1, \ldots,n-r$.
Then $D^1, \ldots , D^{n-r}$ are EDMs since they are principal submatrices of $D$.
Moreover, let $w^i =  \xi^i / (2 e^T_{n_i} \xi^i) $. Then $D^i w^i = e_{n_i}$ and $w^i > \bz$. Consequently,
$D^1, \ldots, D^{n-r}$ are unit spherical EDMs.
Therefore,  it follows from Lemma \ref{lemsimplex}
that each of $D^1, \ldots, D^{n-r}$ is the EDM of a simplex.
It is worth pointing out that Equation (\ref{eqnsps}) implies that, for each $i=1,\ldots,n-r$,
the origin $\bz$ lies in the relative interior \cite{mus19} of the convex hull of the generating points of $D^i$  since $w^i > \bz$.

In the second case, let
$\tilde{\Delta}^{n-r}  = \left[ \begin{array}{cc} \Delta^{n-r} & \\ & \bz \end{array} \right]$.
Then, similar to the first case, $D^1, \ldots, D^{n-r-1}$ are unit spherical EDMs of simplices and
the origin $\bz$ lies in the relative interior of the convex hull of the generating points of each of the EDMs $D^1, \ldots, D^{n-r-1}$.
On the other hand, let $D^{n-r} = 2(E-I) + 2 \tilde{\Delta}^{n-r}$ and let
\[
\tilde{w}^{n-r} = \frac{1}{2 e^T \xi^{n-r} } \left[ \begin{array}{c} \xi^{n-r} \\ \bz \end{array} \right].
\]
Then $\tilde{\Delta}^{n-r} \tilde{w}^{n-r} = \tilde{w}^{n-r}$ and $D^{n-r} \tilde{w}^{n-r} = e$.
Hence, $D^{n-r}$ is a unit spherical EDM and hence, by Lemma \ref{lemsimplex2}, $D^{n-r}$ is the EDM of a simplex.
However, unlike $D^1, \ldots, D^{n-r-1}$,
the origin lies on the relative boundary of the convex hull of the generating points of $D^{n-r}$.
\end{proof}

Finally, we should point out that in the second case of Equation (\ref{QDelQT}), i.e., if  $Q \Delta Q^T$ has, say $s$, zero rows (and columns),
then we chose above to define $\tilde{\Delta}^{n-r}$ by appending these $s$ zero rows and columns to $\Delta^{n-r}$.
In fact, we could have appended any number of these zero rows and columns to any of $\Delta^1, \ldots, \Delta^{n-r}$.

As an  illustration of  the theorems of \v{S}i\v{n}ajov\'{a} and Kuperberg, consider the following example.

\begin{exa}
Let $G$ be the simple graph on the nodes $1,\ldots,5$ and with edge set $E(G)=\{ \; \{1,2\}, \{3,4\} \;\}$.
Hence, $G$ has two nontrivial connected components and one isolated node.
To illustrate \v{S}i\v{n}ajov\'{a}'s Theorem, let
\beq
\Delta = A = \left[ \begin{array}{ccccc} 0 & 1 &  &  & \\
                                      1  & 0 & & & \\
                                      &  & 0 & 1 &  \\
                                      & & 1 & 0 &  \\
                                      & & & & 0 \end{array} \right]
= \left[ \begin{array}{ccc} \Delta^1 &    & \\
                                      &    \Delta^{2} & \\
                                      & &  \bz \end{array} \right],
\eeq
where $A$ is the adjacency matrix of $G$. Then $D=2(E-I)+2\Delta$ is a unit spherical EDM of embedding dimension $3$ that satisfies
(\ref{eqdijsin}). Moreover, an orthonormal representation of $G$ is given by
$p^1 = e^1$, $p^2 = - e^1$, $p^3 = e^2$, $p^4 = - e^2$ and $p^5 = e^3$,
where $e^i$ is the $i$th standard unit vector in $\Rs^3$.

To illustrate Kuperberg's Theorem,
first, if we define $\tilde{\Delta}^{2}  = \left[ \begin{array}{cc} \Delta^{2} & \\ & \bz \end{array} \right]$.
Then  $\Rs^3$ can be split into $2$ orthogonal subsapces $L_1$ and $L_2$ where
$L_1$ consists of the $x$-axis and contains points $p^1$ and $p^2$; while $L_2$  consists of the $y$--$z$ plane and contains points $p^3$, $p^4$ and $p^5$.
Notice that the origin is in the relative interior of the convex hull of $p^1$ and $p^2$, while the origin lies on the relative
boundary of the convex hull of $p^3, p^4$ and $p^5$.

On the other hand, if we define $\tilde{\Delta}^{1}  = \left[ \begin{array}{cc} \Delta^{1} & \\ & \bz \end{array} \right]$.
Then, in this case,  the subspace $L_1$ consists of the $x$--$z$ plane and  contains points $p^1,p^2$ and $p^5$,
while $L_2$  consists of the $y$-axis and contains points $p^3$ and $p^4$.
\end{exa}

\noindent{\bf \large Acknowledgements} I would like to thank Marton Nasz\'{o}di for bringing my attention,
after this note was first posted on the arXiv, 
the reference [10], where the Perron-Frobenius theorem is used to prove Rankin's theorem. 

\bibliographystyle{plain}

\end{document}